\newtheorem{theorem}{Theorem}[section]
\newtheorem{lemma}[theorem]{Lemma}
\newtheorem{corollary}[theorem]{Corollary}
\newtheorem{definition}[theorem]{Definition}
\def\qed{\hfill$\Box$\vspace{12pt}}
\def\max{{\rm max}}
\def\diam{{\rm diam}}
\def\gcd{{\rm gcd}}
\def\mg{\mathbf{g}}
\def\De{\Delta}
\def\Ga{\Gamma}\def\max{{\rm max}}
\def\S_m{{\rm S_m}}
\def\gcd{{\rm gcd}}
\begin{document}
\title[Bipartite divisor graphs for integer subsets]{Bipartite divisor graphs for integer subsets}
\author[M. A. Iranmanesh]{Mohammad A. Iranmanesh}
\address{Mohammad A. Iranmanesh, Department of Mathematics, \newline Yazd University \\ Yazd, 89195-741 , Iran}
\email{iranmanesh@yazduni.ac.ir}
\author[C. E. Praeger]{Cheryl E. Praeger}
\address{Cheryl E. Praeger,  School of Mathematics and Statistics,\newline
The University of Western Australia,\\
 Crawley, WA 6009, Australia} \email{praeger@maths.uwa.edu.au}
\thanks{The first author wishes to thank Yazd University Research Council for financial support during his study leave, and The School of Mathematics and Statistics, University of Western Australia, in particular Prof. Praeger, for their hospitality during his visit and for the facilities and help provided.
The second author was supported by a Federation Fellowship of the Australian Research
Council.}
\begin{abstract}
Inspired by connections described in a recent paper by Mark L.
Lewis, between the common divisor graph $\Ga(X)$ and the prime
vertex graph $\De(X)$, for a set $X$ of positive integers, we define
the bipartite divisor graph $B(X)$, and  show that many of these
connections flow naturally from properties of $B(X)$. In particular
we establish links between  parameters of these three graphs, such
as number and diameter of components, and we characterise bipartite
graphs that can arise as $B(X)$ for some $X$. Also we  obtain
necessary and sufficient conditions, in terms of subconfigurations
of $B(X)$, for one $\Ga(X)$ or $\De(X)$ to contain a complete
subgraph of size $3$ or $4$.

\end{abstract}
\maketitle
\keywords{Bipartite graph, common divisor graph, prime vertex graph.}%

%
\section{Introduction}
\label{sec:introd} We introduce the bipartite divisor graph $B(X)$,
for a non-empty subset $X$ of positive integers, that contains
information about two previously studied graphs, namely the prime
vertex graph and the common divisor graph for $X$. Our work was
inspired by a recent paper~\cite{Lewis} by Mark L. Lewis which
provides a fascinating overview of various graphs associated with
groups. Surprisingly strong combinatorial information available for
these graphs leads to structural information about the groups and
their representations. Lewis distilled and unified many results
concerning these `group graphs' (from the 78 references in his
bibliography) by first defining two graphs associated with an
arbitrary non-empty subset $X$ of positive integers:
\begin{itemize}
\item[(1)] the \emph{prime vertex graph} $\De(X)$ has, as vertex set $\rho(X)$, the set of
primes dividing some element of $X$, and two such primes $p,q$ are joined by an edge if and only if
$pq$ divides some $x\in X$;
\item[(2)] the \emph{common divisor graph} $\Ga(X)$  has vertex set
$X^*:=X\setminus\{1\}$, and $x,y\in X^*$ form an edge if and only if
$\gcd(x,y)>1$.
\end{itemize}
Although, in the group setting, the subset $X$ is usually the set of irreducible character degrees,
or the set of conjugacy class sizes, or conjugacy class indices,  of a finite group, Lewis
showed that, even for arbitrary integer sets $X$, the prime vertex graph and the common divisor
graph share very similar combinatorial properties, for example, they have the same number of
connected components, and similar diameters (where by the \emph{diameter} Lewis means the maximum
diameter of a connected component).

The \emph{bipartite divisor graph} $B(X)$, for an arbitrary
non-empty subset $X$ of positive integers, has as vertex set  the
disjoint union $\rho(X)\cup X^*$, and its edges are the pairs
$\{p,x\}$ where $p\in\rho(X)$, $x\in X^*$ and $p$ divides $x$. Thus
$B(X)$ is bipartite, and $\{\rho(X)|X^*\}$ forms a bi-partition of
the vertex set, that is unique if $B(X)$ is connected (and in the
group cases, $B(X)$ is often connected, see for
example~\cite[Theorems 4.1, 5.2, 7.1, 8.1, 9.8]{Lewis}, key
references being~\cite{Kazarin,Manz,ManzW,Palfy}). Moreover the
`distance $2$-graph', derived from $B(X)$ by replacing the edges of
$B(X)$ by the set of pairs $\{u, v\}$ that have distance $2$ in
$B(X)$, contains $\De(X)$ and $\Ga(X)$ as the subgraphs induced on
$\rho(X)$ and $X^*$, respectively.  Thus it is not surprising that
several combinatorial properties of $\De(X)$ and $\Ga(X)$ can be
derived from similar properties for $B(X)$. We study such properties
as the diameter, girth, number of connected components, and clique
number for these three graphs, obtaining precise links relating
these parameters for the various graphs, which we summarise below. Our
findings lead to interesting new questions in the `group case', some of
which are explored in a forthcoming paper~\cite {BDIP} of the authors
with Bubboloni and Dolfi.

In particular, in~\cite[Lemma 3.3]{Lewis}, Lewis proved that every
graph $\mathcal{G}$ is isomorphic to $\De(X)$ and to $\Ga(Y)$ for
some sets of positive integers $X$ and $Y$. Our main result
characterises those bipartite graphs that arise as $B(X)$ for some
$X$. The proof in Section~\ref{sec:2} gives an explicit construction
of a subset $X$, for a given bipartite graph $\mathcal{G}$.

\begin{theorem}\label{lem:2.7}
A bipartite graph $\mathcal{G}$ is isomorphic to $B(X)$, for some
non-empty set of positive integers $X$, if and only if $\mathcal{G}$ is non-empty
and has no isolated vertices.
\end{theorem}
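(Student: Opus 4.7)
\medskip

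\noindent My plan is to handle the two implications separately, the forward direction by direct inspection and the converse by constructing $X$ explicitly from $\mathcal{G}$.

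For \emph{necessity}, suppose $\mathcal{G}\cong B(X)$. Every $p\in\rho(X)$ divides some element $x$ of $X$; as $x>1$, this $x$ lies in $X^*$, so $\{p,x\}$ is an edge of $B(X)$ and $p$ is non-isolated. Symmetrically, every $x\in X^*$ has a prime divisor in $\rho(X)$, so $x$ is non-isolated as well. Hence $\mathcal{G}$ has no isolated vertices. (The non-emptiness hypothesis on $\mathcal{G}$ simply rules out the degenerate case $X\subseteq\{1\}$, in which $B(X)$ has no vertices.)

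For \emph{sufficiency}, I would build $X$ directly from $\mathcal{G}$. Fix a bipartition $\{P,V\}$ of $\mathcal{G}$ with $P=\{u_1,\dots,u_k\}$ and $V=\{v_1,\dots,v_m\}$; the absence of isolated vertices forces $k,m\geq 1$ and ensures that each $u_j$ and each $v_i$ has at least one neighbour. Choose distinct primes $p_1,\dots,p_k$, and, letting $N(v_i)\subseteq P$ denote the neighbourhood of $v_i$ in $\mathcal{G}$, set
$$x_i := \Big(\prod_{u_j\in N(v_i)} p_j\Big)^{i}, \qquad X := \{x_1,\dots,x_m\}.$$
The design is that the set of prime divisors of $x_i$ records precisely the neighbours of $v_i$ in $P$, so that the maps $u_j\leftrightarrow p_j$ and $v_i\leftrightarrow x_i$ will provide the isomorphism $\mathcal{G}\to B(X)$.

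To finish I would verify three points: (i) each $x_i>1$, so $X^*=X$, which holds because $N(v_i)\neq\emptyset$; (ii) $\rho(X)=\{p_1,\dots,p_k\}$, which holds because every $u_j$ has some neighbour; and (iii) the $x_i$'s are pairwise distinct, so that $|X|=m$. The only real obstacle is (iii), because two distinct $v_i,v_{i'}\in V$ may share the same neighbourhood in $P$, and then the bare product $\prod p_j$ coincides for both. The exponent $i$ is inserted precisely to sidestep this: when $N(v_i)=N(v_{i'})$, we get $x_i=P^i\neq P^{i'}=x_{i'}$ for the common value $P\geq 2$ and distinct $i\neq i'$, while when the neighbourhoods differ, the prime divisor sets of $x_i$ and $x_{i'}$ already differ. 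Once (i)--(iii) are in hand, $p_j\mid x_i$ if and only if $u_j\in N(v_i)$, so the prescribed bijections constitute a graph isomorphism $\mathcal{G}\to B(X)$.
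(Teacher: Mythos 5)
Your proposal is correct and follows essentially the same route as the paper: choose distinct primes indexed by one side of the bipartition and encode each vertex of the other side as the product of the primes of its neighbours, raised to a vertex-dependent exponent to force distinctness. Your explicit verification that the $x_i$ are pairwise distinct (the role of the exponent $i$) is a point the paper leaves implicit, but otherwise the two arguments coincide.
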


\medskip\noindent{\bf Comment on Notation:}\quad

(a) We call a graph \emph{bipartite} if there is a bipartition
$\{V_1\ | V_2\}$ of its vertex set with both $V_1, V_2$ non-empty,
such that each edge joins a vertex of $V_1$ to a vertex of $V_2$. An
\emph{empty graph} is a graph with at least one vertex and no edges, and
a vertex in a graph is \emph{isolated} if it lies on no edge.

(b) Usually the set $X$ of positive integers is  clear from the context.
We therefore suppress $X$ in our notation and write $B, \De, \Ga$ for the graphs $B(X),
\De(X), \Ga(X)$ respectively. Similarly, for example, we denote by
$n(B), n(\De), n(\Ga)$ the number of connected components of
$B,\De,\Ga$ respectively, and, for  vertices $x,y$  in the same
connected component, we denote by $d_B(x,y), d_\De(x,y), d_\Ga(x,y)$
the distance (length of shortest path) between $x$ and $y$ for the
graph $B, \De, \Ga$ respectively. Following Lewis~\cite{Lewis}, we
define the \textit{diameter} as the maximum distance between
vertices in the same connected component, and we denote the
diameters of these three graphs by $\diam(B), \diam(\De),
\diam(\Ga)$ respectively. Also, if there is a cycle in the graph $B,
\De$ or $\Ga$, we denote the \textit{girth} (the length of the
shortest cycle) by $\mathbf{g}(B)$, $\mathbf{g}(\De)$, or
$\mathbf{g}(\Ga)$, respectively.

\medskip\noindent{\bf Summary of other results:}\quad
(Definitions of the additional graph theoretic concepts are given in the relevant subsection.)
\medskip

(1) $B, \De,\Ga$ have equal numbers of connected components, and
the maximum of $\diam(\De)$ and $\diam(\Ga)$ is
$\lfloor\frac{\diam(B)}{2}\rfloor$. \quad
(Lemma~\ref{lem:2.6})

(2) Any subset of $\{B,\De,\Ga\}$ may be acyclic, and the others not. However,
if $B$ contains a cycle of length greater than 4, then all three graphs contain cycles.\quad
(Lemma~\ref{lem:girth})

(3) Both $\De$ and $\Ga$ are acyclic if and only if each connected component of $B$ is a path or
isomorphic to $C_4$. \quad
(Theorem~\ref{trees})

(4) For $m=3,4$, at least one of $\De,\Ga$ contains a clique of size
$m$ (a subgraph $K_m$), if and only if $B$ contains a subgraph in a
specified list. \quad (Theorems~\ref{thm:2.11} and~\ref{thm:2.14})

\section{Representing bipartite graphs as $B(X)$}\label{sec:2}

In this section we prove Theorem~\ref{lem:2.7}, giving an explicit
construction of a subset $X$, for a given bipartite graph
$\mathcal{G}$. We illustrate the construction with a simple example
in Figure \ref{fig:figure1}.

\begin{figure}[here]
\begin{center}
\includegraphics[height=3.5cm]{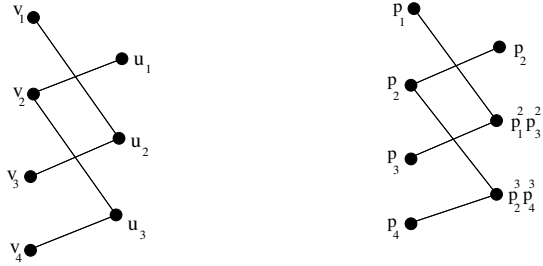}
\caption{Examples of $\mathcal{G}$ (left) and $B(X)$ (right) for
Lemma ~\ref{lem:2.7}} \label{fig:figure1}
\end{center}
\end{figure}

\par\noindent{\bf Proof of Theorem~\ref{lem:2.7}}
Suppose that  $\mathcal{G}$ is a bipartite graph with vertex bipartition $\{V_1|V_2\}$.
Let $V_1=\{ v_1, v_2, \dots, v_m\}$ and $V_2=\{u_1, u_2, \dots, u_n\}$ where $m\geq 1, n\geq1$.
Suppose first that $\mathcal{G}$ has no isolated vertices.
Let $p_1, p_2, \dots, p_m$ be pairwise distinct primes, and let
$M = \{p_1, p_2, \dots, p_m\}$. Define a bijection $\mathsf{f}: V_1
\longrightarrow M$ by $\mathsf{f}(v_i)=p_i$ for each $i$.
For $1\leq j\leq n$ define $I_j =\{\ell|\{v_\ell, u_j\}\in
E(\mathcal{G})\}$ and set $x_j=\prod_{\ell \in I_j}p_{\ell}^{j}$.
Note that $I_j\neq\emptyset$, since there are no isolated vertices in
$\mathcal{G}$. Let $X= \{x_j| 1\leq j\leq n\}$. The fact that $\rho(X)=M$
follows because there are no isolated vertices in
$\mathcal{G}$. Now $\{p_i, x_j\}\in E(B)$,
the edge set of $B= B(X)$, if and only if $p_i$ divides $x_j=\prod_{\ell
\in I_j} p_{\ell}^{j}$, that is, if and only if $i\in I_j$, and this holds
if and only if $\{v_i, u_j\}\in E(\mathcal{G})$. Thus extending $\mathsf{f}$ to a map $V(\mathcal{G})\longrightarrow V(B)$ by $\mathsf{f}(u_i)= x_i$, for each $i$, defines an isomorphism from $\mathcal{G}$ to $B(X)$.

Conversely, suppose that $\mathcal{G}\cong B(X)$, for some $X$.
Then, since by the definition of a bipartite graph, $\mathcal{G}$ has at least
one vertex, $X\neq\{1\}$. The fact that $B(X)$ has no isolated vertices now follows from its
definition. \qed

Theorem~\ref{lem:2.7} provides an important tool for the proofs in the rest of the paper, by
applying the following corollary.

\begin{corollary}\label{cor1}
 For a non-empty set $X$ of positive integers such that $X\ne\{1\}$, there exists a second
non-empty set $Y$ of positive integers, and a graph isomorphism $\phi:B(X)\rightarrow B(Y)$
that induces isomorphisms $\De(X)\cong \Ga(Y)$ and $\Ga(X)\cong\De(Y)$.
\end{corollary}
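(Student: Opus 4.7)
The plan is to apply Theorem~\ref{lem:2.7} to the graph $B(X)$ itself, but with its natural bipartition reversed. First I would verify that $B(X)$ meets the hypotheses of Theorem~\ref{lem:2.7}: since $X$ is non-empty and $X\ne\{1\}$, the set $X^*=X\setminus\{1\}$ is non-empty, and then $\rho(X)$ is non-empty because every element of $X^*$ has at least one prime divisor. By the definition of $B(X)$, every prime in $\rho(X)$ divides some $x\in X^*$ and every $x\in X^*$ is divisible by some prime in $\rho(X)$, so $B(X)$ is non-empty and has no isolated vertices.

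Next I would apply Theorem~\ref{lem:2.7} to $B(X)$ viewed as a bipartite graph with bipartition $\{V_1\mid V_2\}$ where $V_1=X^*$ and $V_2=\rho(X)$, i.e., with the roles of the two parts swapped from the natural bipartition. The construction in the proof of Theorem~\ref{lem:2.7} (which sends $V_1$ to $\rho(Y)$ and $V_2$ to $Y^*$) produces a non-empty set $Y$ of positive integers and an isomorphism $\phi:B(X)\rightarrow B(Y)$ such that $\phi(X^*)=\rho(Y)$ and $\phi(\rho(X))=Y^*$.

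Finally I would check that $\phi$ induces the claimed isomorphisms. As observed in the introduction, $\De(X)$ and $\Ga(X)$ are the subgraphs induced on $\rho(X)$ and $X^*$, respectively, of the distance-$2$ graph of $B(X)$ (this uses the bipartite structure, which forces every path of length $2$ between two vertices of the same part to pass through the opposite part); the analogous statement holds for $Y$. Since $\phi$ is a graph isomorphism it preserves distances, so it sends the distance-$2$ graph on $\rho(X)$ isomorphically onto the distance-$2$ graph on $\phi(\rho(X))=Y^*$, which is $\Ga(Y)$, and it sends the distance-$2$ graph on $X^*$ isomorphically onto that on $\phi(X^*)=\rho(Y)$, which is $\De(Y)$. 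This yields $\De(X)\cong\Ga(Y)$ and $\Ga(X)\cong\De(Y)$.

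There is no real obstacle: the crux is the symmetry of Theorem~\ref{lem:2.7} in the two parts of the bipartition, which allows us to re-realise $B(X)$ as $B(Y)$ with the role of ``prime side'' and ``integer side'' interchanged, automatically swapping $\De$ and $\Ga$.
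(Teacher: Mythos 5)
Your proposal is correct and follows essentially the same route as the paper: apply the construction from the proof of Theorem~\ref{lem:2.7} to $B(X)$ with its bipartition reversed, so that $X^*$ is realised as the prime side and $\rho(X)$ as the integer side of $B(Y)$, whence the induced distance-$2$ graphs swap and give $\De(X)\cong\Ga(Y)$ and $\Ga(X)\cong\De(Y)$. Your write-up is in fact slightly more explicit than the paper's in verifying the hypotheses of Theorem~\ref{lem:2.7} and in spelling out why a graph isomorphism of the bipartite graphs carries the distance-$2$ graphs across.
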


\begin{proof}
Let $\mathcal{G}=B(X)$ with vertex bipartition $\{\rho(X)\,|\,X^*\}$.
By definition, $\mathcal{G}$ is non-empty and has no isolated vertices.
We apply the proof of Theorem~\ref{lem:2.7} to the reverse
bipartition $\{X^*\,|\rho(X)\}$. This produces a non-empty subset $Y$ of
positive integers and a graph isomorphism $\phi:\mathcal{G} \rightarrow B(Y)$,
that induces a graph isomorphism from
$\De(Y)$ to the distance 2 graph induced on
the first part $X^*$ of the bipartition  (which by definition
of $\mathcal{G}$ is $\Ga(X)$), and a graph isomorphism from $\Ga(Y)$
to the distance 2 graph induced on the second part $\rho(X)$ of the
bipartition (which by definition of $\mathcal{G}$ is $\De(X)$).
\qed
\end{proof}
Thus if we wish to prove that a certain relationship holds between $B(X)$
and $\De(X)$, for all $X$, and also between  $B(X)$ and $\Ga(X)$, for
all $X$, it is often sufficient to prove only one of these assertions.

\section{Relating the parameters of $B, \De,\Ga$}\label{sect:params}

In this section we study certain parameters for the three graphs, namely distance, diameter, girth, and number of components. Throughout the section let $X$ denote a non-empty subset of positive integers with $X\ne\{1\}$, so that $X^*\ne\emptyset$. As mentioned above we simplify our notation and write $B:=B(X), \De:=\De(X), \Ga:=\Ga(X)$. We denote the vertex sets of these graphs by $V(B), V(\De), V(\Ga)$, and the edge sets by $E(B), E(\De), E(\Ga)$, respectively.

\subsection{Distance, diameter, and numbers of components}\label{distance}

A key technical result in Lewis's paper, namely~~\cite[Lemma 3.1 and
Corollary 3.2]{Lewis}, can be interpreted as a 1-1 correspondence
between the (connected) components of $\De$ and $\Ga$ leading to the
consequence that the diameters of $\De$ and $\Ga$ differ by at most
1. We extend these results to give analogous information about the
graph $B$ from which the facts about $\De$ and $\Ga$ may be deduced.
We note that, although in many of the `group cases' the graphs $\De$
and $\Ga$ have at most 2 components and diameter at most 3 (see for
example~\cite[Corollary 4.2, Theorems 7.1, 8.1, 8.3]{Lewis}), for
general $X$ these parameters may be arbitrarily large.

For $u\in V(B)$, let $[u]_B$ denote the connected component of $B$ containing $u$, and similarly define $[u]_\De, [u]_\Ga$ if $u\in V(\De)$ or $u\in V(\Ga)$ respectively.

\begin{lemma}\label{lem:2.6}
Let $p, q\in \rho(X)$ and $x, y\in X^*$ such that $[p]_B=[q]_B$ and $[x]_B=[y]_B$. Then,
\begin{itemize}
\item[{\em (a)}] $d_B(p, q)=2d_{\De}(p, q)$, $d_B(x, y)=2d_{\Ga}(x, y)$;

\item[{\em (b)}] if $p$ divides $x$ and $q$ divides $y$, then $[p]_B=[x]_B=[p]_\De\cup[x]_\Ga$ and $d_B(p, q)-d_B(x, y)\in \{-2,0,2\}$;

\item[{\em (c)}] $n(B)= n(\De)= n(\Ga)$;
\item[{\em (d)}] either
\begin{itemize}
\item[{\em(i)}]  $\diam(B)=2 \max \{\diam(\De), \diam(\Ga)\}$, and $|\diam(\De)-\diam(\Ga)|\leq 1$, or
\item[{\em(ii)}] $\diam(\De)=\diam(\Ga)=\frac{1}{2}(\diam(B)-1)$.

\end{itemize}

\end{itemize}

\end{lemma}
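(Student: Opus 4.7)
My plan is to exploit a single bridge between $B$ and the other two graphs: each edge $\{p,q\}$ of $\De$ corresponds to a length-$2$ path $p$--$x$--$q$ in $B$ (take any $x\in X^*$ with $pq\mid x$), and symmetrically each edge $\{x,y\}$ of $\Ga$ corresponds to a length-$2$ path $x$--$p$--$y$ through a common prime divisor. This translates walks between same-part vertices of $B$ into walks of half the length in $\De$ or $\Ga$, and back. For (a), I would take a shortest $B$-path between two primes $p,q$ (even length, since $B$ is bipartite); the primes along it give a $\De$-walk of exactly half that length, so $d_{\De}(p,q)\le d_B(p,q)/2$. Conversely, replacing each edge of a shortest $\De$-path by a length-$2$ $B$-path yields $d_B(p,q)\le 2d_{\De}(p,q)$. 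The same argument with the two bipartition parts swapped handles $d_B(x,y)=2d_\Ga(x,y)$, and in any case Corollary~\ref{cor1} reduces the second equality to the first.

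For (b), once $p\mid x$ and $q\mid y$ the edges $\{p,x\},\{q,y\}$ of $B$ already put $p,x,q,y$ into a single $B$-component, giving $[p]_B=[x]_B$ at once. The decomposition $[p]_B=[p]_{\De}\cup[x]_{\Ga}$ then falls out of the translation in (a): any prime reachable from $p$ in $B$ is also reachable in $\De$, and conversely; likewise for integers via $\Ga$. For the distance clause, I apply the triangle inequality along the two unit edges to get $d_B(p,q)\le d_B(x,y)+2$ and the symmetric bound, and observe that both distances are even (same-part pairs in the bipartition), so their difference lies in $\{-2,0,2\}$. Part (c) is then immediate: $B$ has no isolated vertices, so every $B$-component meets both $\rho(X)$ and $X^*$, and by (a) these intersections are respectively a single $\De$-component and a single $\Ga$-component, yielding the claimed bijections.

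For (d), set $D:=\diam(B)$, $d_1:=\diam(\De)$, $d_2:=\diam(\Ga)$. Part (a) gives $D\ge 2\max(d_1,d_2)$ immediately. The heart of the argument is the matching upper bound with the correct parity. I plan to fix a diametral pair $u,v$ in $B$: if $u,v$ lie in the same part then $D$ is even and $\le 2\max(d_1,d_2)$ by (a); if $u,v$ lie in opposite parts, say $u=p$ and $v=x$, then $D$ is odd, and choosing any prime $p'$ dividing $x$ gives $d_B(p,p')\in\{D-1,D+1\}$, which by (a) is at most $2d_1$, forcing $D\le 2d_1+1$. Symmetrically $D\le 2d_2+1$. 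Combined with $|d_1-d_2|\le 1$, which pops out of applying (b) to a diametral $\De$-pair together with arbitrary integer neighbours, a short parity split finishes the job: if $D$ is even we land in case (i) with $D=2\max(d_1,d_2)$, while if $D$ is odd the lower bound $D\ge 2\max(d_1,d_2)+1$ together with $D\le 2\min(d_1,d_2)+1$ forces $d_1=d_2=(D-1)/2$, giving case (ii). The subtlest step I anticipate is in the odd case: one must verify that the auxiliary prime neighbour $p'$ of $x$ really lies in the same $B$-component as $p$ before invoking (a), which is exactly where part (c) is used.
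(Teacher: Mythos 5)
Your proposal is correct and follows essentially the same route as the paper: the length-halving/doubling correspondence between paths in $B$ and paths in $\De$ or $\Ga$ drives (a), (b) yields (c), and (d) is settled by a parity analysis of a diametral pair of $B$ together with the bound $|\diam(\De)-\diam(\Ga)|\leq 1$ obtained from a diametral pair of $\De$ (or $\Ga$) and integer neighbours of its endpoints. Your triangle-inequality-plus-parity argument for $d_B(p,q)-d_B(x,y)\in\{-2,0,2\}$ is a cleaner rendering of the paper's case analysis on whether shortest paths can be routed through $x$ and $y$, but it is not a different method.
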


Table \ref{tab:2.1} gives simple examples to show that all possibilities for the diameters of $B, \De, \Ga$ given by Lemma \ref{lem:2.6}(d) arise.

\begin{table}
\caption{Illustration of all cases of Lemma \ref{lem:2.6}(d)}
\begin{center}
\begin{tabular}{|l||c||c||c|}
\multicolumn{4}{c}{}\\ \hline
\hspace{7.5mm}$X$&$\diam \De$&$\diam \Ga$&$\diam B$\\ \hline

$\{6, 10, 15\}$&$1$&$1$&$2$ \\ \hline

$\{2, 10\}$&$1$& $1$&$3$ \\ \hline

$\{2, 3, 6\}$&$1$&$2$&$4$ \\ \hline

$\{6, 15\}$&$2$&$1$&$4$ \\ \hline

\end{tabular}
\end{center}
\label{tab:2.1}
\end{table}

\medskip
\begin{proof}
(a) Suppose that $d_{\De}(p, q) = k$. Then there exists a shortest path $P_\De=(p_0, p_1, \dots, p_k)$ in $\De$ with $p= p_0$ and $q=p_k$. Now $\{ p_i, p_{i+1} \}$ is an edge of $\De$ if and only if $d_B (p_i, p_{i+1})=2$, and hence there exists a path $P_B=(p_0, x_1, p_1, \dots, x_{k-1}, p_k)$ in $B$ of length $2k$. Thus $d_B(p, q)\leq 2k$, and as $p,q$ are in the same part of the bipartition of $B$, we have $d_B(p, q)=2\ell\leq 2k$. If $P_B'=(p_0', x_1', p_1', \dots, x_{\ell}', p_{\ell}')$ is a shortest path in $B$ with $p= p_0'$ and $q=p_\ell'$, then $P_\De'=(p_0', p_1', \dots, p_{\ell}')$ is a path of length $\ell$  in $\De$, so $k=d_{\De}(p, q)\leq \ell$, and hence $d_B(p, q)=2k=2d_{\De}(p, q)$.
An analogous proof shows that $d_B(x, y)=2d_{\Ga}(x, y)$.

(b) Suppose now that $p$ divides $x$ and $q$ divides $y$. If the path $P_B'$ above can be chosen with $x_1'=x$ and $x_{\ell}'=y$ then $d_B(p,q)-d_B(x,y)\geq2$, and a similar argument to the above shows that equality holds; on the other hand if one of these equalities, but not the other can be achieved then we find $d_B(p,q)=d_B(x,y)$, while if neither can hold on a shortest path $P_B'$, then $d_B(x,y)-d_B(p,q)=2$. This proves that $d_B(p, q)-d_B(x, y)\in \{-2,0,2\}$,
and (see Table~\ref{tab:2.1}) all cases are possible. Moreover in this case, by assumption, the component $[p]_B$ contains all of $p,q,x,y$, and the fact that  $[p]_B=[p]_\De\cup[x]_\Ga$ follows from the proof of part (a).

(c) The assertion about numbers of connected components follows from the assertion in part (b) about components.

(d) Let $m=\max\{\diam(\De), \diam(\Ga)\}$. Then it follows from part (a) that $\diam(B)\geq 2m$. Let $M=\diam(B)$, so that $M\geq 2m$, and choose $a,b\in V(B)$ such that $d_B(a,b)=M$. If $a$ and $b$ are both in $\rho(X)$ (or both in $X^*$), then $M= d_B(a, b)=2d_{\De}(a, b)\leq2\diam (\De)\leq 2m$ (respectively, $M \leq 2 \diam (\Ga)\leq 2m$) and in either case we conclude that $M=2m$. On the other hand (without loss of generality) suppose that $a\in {\rho(X)}$ and $b\in X^*$, so $M$ is odd and hence $M\geq 2m+1$. Let $p\in\rho(X)$ be the vertex adjacent to $b$ on a path $P_B$ of length $M$ from $a$ to $b$. Then
the sub-path of $P_B$, from $p$ to $a$, must be a shortest path between these two vertices, by definition of $M$, and hence $M-1 = d_B(a, p) = 2 d_{\De}(a, p)$, by part (a), and this is at most $2\diam(\De)$. A similar argument using the vertex adjacent to $a$ on $P_B$ yields
$M \leq2 \diam(\Ga)+1$. It follows that $\diam(\De)=\diam(\Ga)=\frac{M-1}{2}$.

To prove the last assertion of (i) we may assume that $\diam(B)=2m$.
Let $\jmath=\diam(\De)$ and let $p_0, p_\jmath\in \rho(X)$ be such
that $d_\De(p_0,p_\jmath)=\jmath$. Then by part (a), there exists a
path $P_B'$ of length $2\jmath$ in $B$ from $p_0$ to $p_\jmath$. Let
$x_0, x_1$ be vertices on $P_B'$ adjacent to $p_0$ and $p_\jmath$
respectively. Then the sub-path of $P_B'$ from  $x_0$ to $x_1$ of
length $2\jmath-2$ is a shortest path in $B$ between these two
vertices. Thus $d_B(x_0, x_1)= 2\jmath-2$. By part (a),
$d_{\Ga}(x_0, x_1)= \jmath-1$ and therefore $\diam(\Ga)\geq
\diam(\De)-1$. A similar argument shows that $\diam(\De)\geq
\diam(\Ga)-1$. Hence $|\diam(\De)-\diam(\Ga)|\leq 1$.\qed
\end{proof}

\subsection{Cycles and girth}\label{girth}
A graph $\mathcal{G}$ is said to be \emph{acyclic} if it contains no
cycles, that is, it contains no closed paths of length at least 3.
On the other hand, recall that, if $\mathcal{G}$ contains a cycle
then the minimum length of its cycles is called its \emph{girth} and
denoted $\mg(\mathcal{G})$. For any subset $\mathbf{K}\subseteq
\{B,\De,\Ga\}$ there exists $X$ such that the graphs in $\mathbf{K}$
are acyclic and each graph not in $\mathbf{K}$ contains a cycle.
Examples of subsets $X$ are provided in Table~\ref{tab:girth} for
the seven non-empty subsets $\mathbf{K}$ of $\{B,\De,\Ga\}$, and if
$X=X_2\cup X_3\cup X_4$, with the $X_i$ as in Table~\ref{tab:girth},
then all three graphs contain cycles. In this last example, $B$ has
girth 4, while the other two graphs have girth 3. However, once the
graph $B$ contains a cycle of length greater than 4, we prove that
all three of the graphs contain cycles. Even in this case it is
possible for one or both of $\De$ or $\Ga$ to have girth 3
regardless of the size of $\mg(B)$, simply by adding to $X$ an
analogue of the subset $X_2$ or $X_3$ of Table~\ref{tab:girth}
(involving suitable primes). However if the girths of $\De$ or $\Ga$
are greater than 3, we show that there is a tight link between these
girths and the minimum length of cycles in $B$ with more than 4
vertices.

In Table~\ref{tab:girth} we denote a complete graph and a cycle on $m$ vertices by $K_m$ and $C_m$, respectively, and if $B=B(X)$ is a complete bipartite graph with $|\rho(X)|=m$ and $|X^*|=n$, then we denote $B$ by $K_{m,n}^\rightarrow$.

\begin{table}
\caption{Illustration of acyclic possibilities for $B, \De, \Ga$}
\begin{center}
\begin{tabular}{|l|c|c|c|c|}
\hline
$i$&$X_i$&$B$&$\De$&$\Ga$\\ \hline
$1$&$\{2\}$&$K_2$&$K_1$&$K_1$ \\
$2$&$\{2,4,8\}$&$K_{1,3}^\rightarrow$&$K_1$&$K_3$ \\
$3$&$\{105\}$ &$K_{3,1}^\rightarrow$&$K_3$&$K_1$ \\
$4$&$\{11\cdot13,11^2\cdot13\}$&$C_4$&$K_2$&$K_2$ \\
$5$&$X_2\cup X_3$&$K_{1,3}^\rightarrow+K_{3,1}^\rightarrow$&
$K_1+K_3$&$K_1+K_3$ \\
$6$&$X_2\cup X_4$&$K_{1,3}^\rightarrow+C_4$&
$K_1+K_2$&$K_2+K_3$ \\
$7$&$X_3\cup X_4$&$K_{3,1}^\rightarrow+C_4$&
$K_2+K_3$&$K_1+K_2$ \\  \hline
\end{tabular}
\end{center}
\label{tab:girth}
\end{table}

\begin{lemma}\label{lem:girth}
Suppose that $B$ contains a cycle of length greater than $4$. Then each of
$\De$ and $\Ga$ also contains a cycle. Moreover, for $\Phi\in\{\De,\Ga\}$, either $\mathbf{g}(\Phi)= 3$ or  $\mathbf{g}(\Phi)= \frac{1}{2}
\mathbf{g}'(B)$, where $\mathbf{g}'(B)$ is the minimum length  of cycles of $B$ with more than four vertices.
\end{lemma}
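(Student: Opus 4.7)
The plan is to exploit that $B$ is bipartite, so every cycle in $B$ has even length and alternates between $\rho(X)$ and $X^*$. A cycle of length $2k>4$ (so $k\geq 3$) in $B$ uses $k$ distinct primes, each cyclically consecutive pair sharing a neighbour in $X^*$; the projected edges therefore form a cycle of length $k$ in $\De$. By Corollary~\ref{cor1} the same argument applies to $\Ga$. This settles the first assertion and, applied to a shortest cycle of $B$ with more than four vertices, yields $\mathbf{g}(\Phi)\leq \mathbf{g}'(B)/2$ for $\Phi\in\{\De,\Ga\}$.

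For the girth identity, by Corollary~\ref{cor1} it suffices to treat $\Phi=\De$. Set $g=\mathbf{g}(\De)$; if $g=3$ there is nothing to prove, so assume $g\geq 4$ and aim to prove $\mathbf{g}'(B)\leq 2g$ by lifting a shortest cycle of $\De$ into $B$. Choose a shortest cycle $p_0,p_1,\dots,p_{g-1},p_0$ in $\De$ and, for each $i$, pick $x_i\in X^*$ divisible by both $p_i$ and $p_{i+1}$ (indices modulo $g$). The closed walk $p_0,x_0,p_1,x_1,\dots,p_{g-1},x_{g-1},p_0$ in $B$ has length $2g\geq 8$, and provided all $2g$ vertices are distinct it is a cycle in $B$ with more than four vertices, delivering $\mathbf{g}'(B)\leq 2g$ and hence $\mathbf{g}(\De)=\mathbf{g}'(B)/2$ once combined with the inequality from the previous paragraph.

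The crux, and the step I expect to be the main obstacle, is showing that the $x_i$ are pairwise distinct (the $p_i$ already are by choice). Suppose $x_i=x_j$ for some $0\leq i<j\leq g-1$. Then $p_i,p_{i+1},p_j,p_{j+1}$ all divide this common integer, so every pair among these four primes is joined in $\De$. If $i$ and $j$ are not cyclically consecutive on the shortest cycle, i.e.\ $2\leq j-i\leq g-2$, then the edge $\{p_i,p_j\}$ is a chord and yields a subcycle $p_i,p_{i+1},\dots,p_j,p_i$ of length $j-i+1$ with $3\leq j-i+1\leq g-1$, contradicting minimality of $g$. If instead the pair is cyclically consecutive (so $j=i+1$, or $(i,j)=(0,g-1)$), then three successive primes on the shortest cycle all divide the same integer, producing a triangle in $\De$ and contradicting $g\geq 4$. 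Either way a contradiction arises, so the lifted walk is a genuine cycle in $B$ and the equality follows.
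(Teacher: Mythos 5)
Your proof is correct and follows essentially the same route as the paper's: project a shortest cycle of $B$ with more than four vertices to obtain cycles in $\De$ and $\Ga$ (giving $\mathbf{g}(\Phi)\leq\frac{1}{2}\mathbf{g}'(B)$), then lift a shortest cycle of $\De$ back to $B$, the key point in both arguments being that a repeated $x_i$ forces a complete subgraph on three or four of the primes and hence $\mathbf{g}(\De)=3$. The only difference is organisational: the paper treats the repeated-$x_i$ case as directly yielding girth $3$, whereas you split it into the chord and cyclically-consecutive subcases, which amounts to the same thing.
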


\begin{proof}
Since $B$ is bipartite, $\mathbf{g}'(B)=2k$ for some $k\geq3$.
Let $P_B=(p_1, x_1, \dots, p_k, x_k)$ be a closed path of length $2k$ in $B$ with
the $p_i\in\rho(X)$ and the $x_i\in X^*$. By the definition of $B$, $p_i$
divides $x_i$ and $x_{i-1}$, for $i=1, \dots, k$, reading the subscripts modulo $k$.
Hence there exist closed paths of length $k$ in both $\De$ and $\Ga$. This implies
that both $\De$ and $\Ga$ contain cycles and $\mathbf{g}(\De)\leq k, \mathbf{g}(\Ga)\leq k$.

If $\mathbf{g}(\De)=\ell<k$, then there exists a
closed path $P_\De=(p_1', p_2', \dots, p_{\ell}')$ in $\De$.
By the definition of $\De$, for each $i$, there exists $x_i'\in X^*$
that is divisible by both $p_i'$ and $p_{i+1}'$, reading subscripts modulo
$\ell$. If the $x_i'$ are pairwise distinct, then  $P_B'=(p_1', x_1', \dots, p_\ell', x_\ell')$
is a closed path in $B$ of length $2\ell$, and $6\leq 2\ell < 2k=\mg'(B)$, which
is a contradiction. Hence the $x_i'$ are not all distinct. Let $i,j$ be such
that $1\leq i<j\leq\ell$ and $x_i'=x_j'$. Then in $\De$ the induced subgraph on
the subset $\{p_i',p_{i+1}',p_j',p_{j+1}'\}$ is a complete graph
(of order 3 or 4) and hence $\ell=\mg(\De)=3$. Thus either $\mg(\De)=3$ or $\mg(\De)=k$. A similar proof shows that either $\mg(\Ga)=3$ or $\mg(\Ga)=k$. \qed
\end{proof}

We consider further, in Section ~\ref{sub:acyclic}, the case where both $\Ga$ and
$\De$ are acyclic, characterising the graphs $B$ in this case.

\section{Subgraphs of $B, \De, \Ga$}\label{sect:configs}

In this section we prove several results that link existence of certain
subgraphs in $B$ with the existence of related subgraphs in
$\De$ and $\Ga$.
Let  $\mathcal{G}=(V, E)$ be a graph with vertex set $V$
and edge set $E$. By a \emph{subgraph} of $\mathcal{G}$, we mean a graph
$\mathcal{G}_0=(V_0, E_0)$ where $V_0\subseteq V$ and $E_0\subseteq E \cap
V_0^{\{2\}}$. If $E_0=E\cap V_0^{\{2\}}$, then $\mathcal{G}_0$ is called an
\emph{induced subgraph}.

\subsection{Triangles in $\De$ and $\Ga$}\label{sub:triangles}

First we look at the existence of triangles (that is, 3-cycles, closed paths
of length 3) in the graphs $\De$ and $\Ga$.

\begin{theorem}\label{thm:2.11}
At least one of $\De, \Ga$ contains a triangle if and only if $B$
contains $C_6$ or $K_{1,3}$ as an induced subgraph.
\end{theorem}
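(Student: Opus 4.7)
The plan is to prove the biconditional by handling each direction separately, invoking Corollary~\ref{cor1} to cut the work for one half roughly in half.

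For the ``if'' direction I would argue directly. Suppose first that $K_{1,3}$ is an induced subgraph of $B$, with center $c$ and leaves $a_1,a_2,a_3$. Since $B$ is bipartite, either $c\in\rho(X)$ with the $a_i\in X^*$, or vice versa. In the first case the prime $c$ divides each $a_i$, so $\gcd(a_i,a_j)\geq c>1$ for all $i\neq j$, giving a triangle on $\{a_1,a_2,a_3\}$ in $\Ga$. In the second case each prime $a_i$ divides $c\in X^*$, so $a_i a_j\mid c$, yielding a triangle in $\De$. If instead $B$ contains an induced $C_6$, I would label its vertices along the cycle as $p_1,x_1,p_2,x_2,p_3,x_3$ with $p_i\in\rho(X)$, $x_i\in X^*$; reading off the six edges gives $p_i p_{i+1}\mid x_i$ (indices mod $3$), whence $\{p_1,p_2,p_3\}$ induces a triangle in $\De$ (and, symmetrically, $\{x_1,x_2,x_3\}$ induces one in $\Ga$).

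For the ``only if'' direction, Corollary~\ref{cor1} lets me assume without loss of generality that $\De$ contains a triangle, on three primes $p_1,p_2,p_3$, and that for each pair $\{i,j\}$ some $x_{ij}\in X^*$ has $p_i p_j\mid x_{ij}$. I would then split into two cases according to whether or not some element of $X^*$ is divisible by the product $p_1 p_2 p_3$. If such an element $x$ exists, take $x_{12}=x_{13}=x_{23}=x$; then the subgraph of $B$ induced on the four distinct vertices $\{x,p_1,p_2,p_3\}$ is $K_{1,3}$, because the bipartition forbids edges within $\{p_1,p_2,p_3\}$, and the only remaining candidate edges $\{x,p_i\}$ are all present.

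In the opposite case, I claim the six vertices $p_1,p_2,p_3,x_{12},x_{13},x_{23}$ are pairwise distinct and induce a $C_6$ in $B$. Distinctness of the $x_{ij}$'s follows because, for distinct primes, an equality such as $x_{12}=x_{13}$ would force $\mathrm{lcm}(p_1 p_2, p_1 p_3)=p_1 p_2 p_3\mid x_{12}$, contradicting the case assumption. The six edges of $B$ guaranteed by the divisibility relations trace out the hexagon $p_1 - x_{12} - p_2 - x_{23} - p_3 - x_{13} - p_1$, and the only possible chords are edges of the form $\{p_k,x_{ij}\}$ with $k\notin\{i,j\}$; any such chord would again force $p_1 p_2 p_3\mid x_{ij}$, a contradiction. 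The main obstacle in the whole argument is precisely this verification that the candidate hexagon is \emph{induced}, and it is exactly this chord-free requirement that motivates the dichotomy on existence of a common $X^*$-multiple of $p_1 p_2 p_3$, producing a $K_{1,3}$ in one branch and a $C_6$ in the other.
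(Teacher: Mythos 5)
Your proposal is correct and takes essentially the same approach as the paper: the paper argues from a triangle in $\Ga$, splitting on whether a single prime divides all three vertices, and transfers the $\De$ case via Corollary~\ref{cor1}, while you argue the exactly dual version starting from a triangle in $\De$ and splitting on whether $p_1p_2p_3$ divides some element of $X^*$. Your verification that the hexagon is chord-free is in fact spelled out a little more explicitly than in the paper, but the decomposition and key ideas coincide.
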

\begin{proof}
Suppose first that $\mathbf{g}(\Ga)=3$ and let $P_\Ga=(x_1, x_2, x_3)$
be a cycle in $\Ga$. If there exists a prime $p$ which divides $x_i$,
for all $i=1, 2, 3$, then the set $\{p, x_1, x_2, x_3 \}$ induces a
subgraph $K_{1,3}$ of $B$. So we may assume that no such prime exists.
Then, since $P_\Ga$ is a cycle in $\Ga$, there are distinct primes $p_1, p_2,
p_3$ such that, for each $i$, $p_i$ divides $x_{i-1}$ and $x_i$, writing
subscripts modulo 3, and the set $\{p_1, x_1, p_2, x_2, p_3, x_3\}$ induces
a subgraph $C_6$ of $B$. Thus $\mathbf{g}(\Ga)=3$ implies that $B$
contains  an induced subgraph isomorphic to either $C_6$ or $K_{1,3}$.
By Corollary~\ref{cor1}, it follows that $\mathbf{g}(\De)=3$ implies that $B$
contains  an induced subgraph isomorphic to either $C_6$ or $K_{1,3}$.

Conversely, if $\{p_1, x_1, p_2, x_2, p_3, x_3\}$ induces a subgraph $C_6$
in $B$, where the
$p_i\in\rho(X)$ and the $x_i\in X^*$, then $(p_1, p_2, p_3)$ and $(x_1,
x_2, x_3)$ are cycles in $\De$ and $\Ga$ respectively, so $\mg(\De)
=\mg(\Ga)=3$.  Similarly if $B$ contains an induced subgraph $K_{1,3}$,
then at least one of $\De$, $\Ga$ contains a triangle.
This completes the proof. \qed
\end{proof}

\subsection{Acyclic graphs}\label{sub:acyclic}

Next we characterise the cases where both $\De$ and $\Ga$ are acyclic.

\begin{theorem}\label{trees}
Both the graphs $\Ga$ and $\De$ are acyclic if and only if each connected component
of $B$ is a path or a cycle of length $4$.
\end{theorem}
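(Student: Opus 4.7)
My plan is to deduce both directions from Lemma \ref{lem:girth}, Theorem \ref{thm:2.11} and one simple bipartite observation, namely that in a bipartite graph a vertex of degree at least $3$ automatically induces a $K_{1,3}$, since its neighbours lie in the opposite part and are pairwise non-adjacent.

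For the forward direction, suppose that both $\De$ and $\Ga$ are acyclic. In particular neither contains a triangle, so the contrapositive of Theorem \ref{thm:2.11} tells us that $B$ has no induced $K_{1,3}$. By the observation above, every vertex of $B$ has degree at most $2$, and therefore each connected component of $B$ is a path or a cycle. Since $B$ is bipartite, any cycle has even length at least $4$, while the contrapositive of Lemma \ref{lem:girth} forbids cycles of length greater than $4$. Hence every cyclic component of $B$ is a $C_4$.

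For the converse, assume that each component $C$ of $B$ is a path or a $C_4$, and note that the edge sets of $\De$ and $\Ga$ are determined componentwise, since a distance-$2$ pair in $B$ must lie in the same component of $B$. It therefore suffices to show that for each $C$ the subgraphs of $\De$ and $\Ga$ induced on $C\cap\rho(X)$ and $C\cap X^*$ are acyclic. If $C$ is a path $v_0 v_1\dots v_k$ in $B$, then the primes occupy alternate positions along $C$, and two primes $v_{2i},v_{2j}$ share a common neighbour in $B$ precisely when $|i-j|=1$; thus the induced subgraph of $\De$ on the primes in $C$ is itself a path (possibly a single vertex), and the same argument applies to $\Ga$. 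If $C$ is a $4$-cycle with primes $p_1,p_2$ and non-prime vertices $x_1,x_2$, then $\De$ restricted to $\{p_1,p_2\}$ is the single edge $\{p_1,p_2\}$, and similarly $\Ga$ contributes only $\{x_1,x_2\}$. In every case the contribution is acyclic, hence so are $\De$ and $\Ga$.

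The only subtle point, and the step I expect to matter most, is the passage from ``$\De,\Ga$ triangle-free'' to ``$B$ has maximum degree at most $2$'': once this bipartite observation is recognised, Lemma \ref{lem:girth} handles the cycles of $B$ uniformly, and the converse direction reduces to a routine distance-$2$ check on each component.
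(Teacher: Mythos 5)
Your proof is correct and follows essentially the same route as the paper's: bound the degree of every vertex of $B$ by $2$, decompose $B$ into paths and even cycles, invoke Lemma~\ref{lem:girth} to exclude cycles of length greater than $4$, and then verify the converse component by component. The only cosmetic differences are that you obtain the degree bound via Theorem~\ref{thm:2.11} together with the (correct) observation that a degree-$3$ vertex of a bipartite graph induces a $K_{1,3}$, where the paper argues directly that such a vertex forces a triangle in $\De$ or $\Ga$, and that in the converse you describe the distance-$2$ graphs on a path component explicitly (they are paths) where the paper runs a short contradiction argument; both variants are sound.
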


\begin{proof}
Suppose first that $\De,\Ga$ are both acyclic. If some vertex of $B$ lies on at least three edges, then
one of $\De$, $\Ga$ contains a 3-cycle, which is a contradiction.
Thus each vertex of $B$ lies on at most two edges in $B$. Since $B$ is bipartite, this means that
each connected component of $B$ is a path, or a cycle $C_{2k}$ of even length $2k\geq4$. Moreover, in
the case of a component $C_{2k}$, it follows from Lemma~\ref{lem:girth} that $k=2$.

Conversely, suppose that each component of $B$ is a path or isomorphic to $C_4$. For a component
$C_4$ of $B$, the corresponding component of $\De, \Ga$ is isomorphic to $K_2$.
Consider a component $B'$ of $B$ which is a path. Suppose that $P_\De=(p_1, p_2, \dots,
p_{\ell})$ is a cycle in the corresponding component of $\De$ of length $\ell\geq3$.
By the definition of $\De$, for each $i$, there exists $x_i\in X^*$
that is divisible by both $p_i$ and $p_{i+1}$, reading subscripts modulo
$\ell$. If the $x_i$ are pairwise distinct, then  $P_{B'}=(p_1, x_1, \dots, p_\ell,
x_\ell)$ is a cycle in $B'$, which is a contradiction.
Hence there exist $i,j$ such
that $1\leq i<j\leq\ell$ and $x_i=x_j$. This however implies that $x_i$ is
joined to at least three vertices in $B'$,
contradicting the fact that $B'$ is a path.
Hence the  component of $\De$ corresponding to $B'$  is acyclic. A similar proof shows
that the component of $\Ga$ corresponding to $B'$ is also acyclic.
\qed \end{proof}

We have the following immediate corollary for the case where both $\De$ and $\Ga$ are trees, where
by a \emph{tree} we mean a connected acyclic graph.

\begin{corollary}
Both graphs $\Ga$ and $\De$ are trees if and only if either $B$ is a path
or $B\cong C_4$.
\end{corollary}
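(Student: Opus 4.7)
The plan is to derive this as a direct consequence of Theorem~\ref{trees} together with Lemma~\ref{lem:2.6}(c), using only the definition of a tree as a connected acyclic graph. The two ingredients we need are: (i) a tree is acyclic, and (ii) a tree is connected, i.e.\ has exactly one component.

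For the forward direction, assume $\De$ and $\Ga$ are both trees. Since they are acyclic, Theorem~\ref{trees} gives that every connected component of $B$ is a path or is isomorphic to $C_4$. Since $\De$ and $\Ga$ are connected we have $n(\De)=n(\Ga)=1$, and then Lemma~\ref{lem:2.6}(c) forces $n(B)=1$, so $B$ itself is a single component. Combining these two facts, $B$ is either a path or $B\cong C_4$.

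For the converse, suppose $B$ is a path or $B\cong C_4$. Then $B$ is connected, so by Lemma~\ref{lem:2.6}(c) both $\De$ and $\Ga$ are connected. Moreover $B$ consists of a single component which is a path or $C_4$, so Theorem~\ref{trees} applies and yields that $\De$ and $\Ga$ are acyclic. A connected acyclic graph is a tree, completing the proof.

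There is essentially no obstacle: the corollary is purely a matter of bolting together the ``acyclic'' statement of Theorem~\ref{trees} with the equality of component counts from Lemma~\ref{lem:2.6}(c), and observing that the single-component hypothesis on $\De,\Ga$ forces $B$ itself (rather than merely each of its components) to be a path or $C_4$.
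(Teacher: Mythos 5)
Your proof is correct and is exactly the argument the paper has in mind: the authors state the corollary as ``immediate'' from Theorem~\ref{trees}, and the only extra ingredient needed is Lemma~\ref{lem:2.6}(c) to convert connectedness of $\De$ and $\Ga$ into connectedness of $B$ (and back), which you supply. No issues.
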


\subsection{Incidence graphs of complete graphs}\label{inc}

As preparation for our final theorem, we study the
existence of incidence graphs of complete graphs as subgraphs of $B$.

\begin{definition}\label{def:2.12}{\rm
Let $\mathcal{G}=(V, E)$ be a graph with vertex set $V$ and edge set $E$.
Then the \emph{incidence graph} $\mathtt{Inc}(\mathcal{G})$ of $\mathcal{G}$,
is the bipartite graph with vertex set $V\dot{\cup}E$ such that
$\{v, e\}$ forms an edge if and only if $v\in V$, $e\in E$ and $v$ is
incident with $e$ in $\mathcal{G}$.}
\end{definition}

\begin{lemma}\label{lem:2.13}
 The graph $B$ contains a subgraph isomorphic to $\mathtt{Inc}(K_{\ell})$
if and only if one of the following conditions {\em (i)} or {\em (ii)} holds.

{\em (i)} $\Ga$ contains a complete subgraph $K_{\ell}$ with vertices $\{x_1, x_2, \dots, x_{\ell}\}$, and there are $\binom{\ell}{2}$ pairwise distinct primes $p_{ij}$, for $1\leq i< j\leq \ell$, such that $p_{ij}$ divides $\gcd(x_i, x_j)$.

{\em (ii)} $\De$ contains a complete subgraph $K_{\ell}$ with vertices $\{p_1, p_2, \dots, p_{\ell}\}$ and there are $\binom{\ell}{2}$ pairwise distinct numbers $x_{ij}\in X^*$, for $1\leq i< j\leq \ell$, such that $p_ip_j$ divides $x_{ij}$.

\end{lemma}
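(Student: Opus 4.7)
The plan is to exploit the bipartite structure of both $B$ and the incidence graph $\mathtt{Inc}(K_\ell)$, which splits the argument naturally into two symmetric cases according to which side of $B$'s bipartition receives the $\ell$ vertex-images of $K_\ell$. The case $\ell=1$ is vacuous, so I would assume $\ell\ge 2$ throughout.

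First, I would record that $\mathtt{Inc}(K_\ell)$ is bipartite with unique bipartition $\{V(K_\ell)\mid E(K_\ell)\}$ of sizes $\ell$ and $\binom{\ell}{2}$, and is connected for $\ell\ge 2$. Since $B$ is bipartite with bipartition $\{\rho(X)\mid X^*\}$, any subgraph $H\subseteq B$ with $H\cong\mathtt{Inc}(K_\ell)$ must send the two parts of $\mathtt{Inc}(K_\ell)$ to the two parts of $B$ (possibly with the sides interchanged).

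For the forward implication, suppose such an $H$ exists. In the first case, the $\ell$ images of $V(K_\ell)$ lie in $X^*$, say as $x_1,\dots,x_\ell$, and the $\binom{\ell}{2}$ images of $E(K_\ell)$ lie in $\rho(X)$ as pairwise distinct primes $p_{ij}$ ($1\le i<j\le\ell$). The incidence relation in $\mathtt{Inc}(K_\ell)$ forces each $p_{ij}$ to be adjacent in $B$ to both $x_i$ and $x_j$, so $p_{ij}\mid\gcd(x_i,x_j)$; in particular $\gcd(x_i,x_j)>1$, so $\{x_i,x_j\}\in E(\Ga)$ and $\{x_1,\dots,x_\ell\}$ induces a $K_\ell$ in $\Ga$. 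This is condition (i). In the second case, with the two parts of $\mathtt{Inc}(K_\ell)$ mapped the other way, the symmetric argument yields (ii); alternatively one may appeal directly to Corollary~\ref{cor1}.

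For the converse, given (i) I would construct an explicit subgraph of $B$ on vertex set $\{x_1,\dots,x_\ell\}\cup\{p_{ij}\colon 1\le i<j\le\ell\}$ with edges $\{p_{ij},x_i\}$ and $\{p_{ij},x_j\}$ for each pair $i<j$; each such edge lies in $E(B)$ because $p_{ij}$ divides $x_i$ and $x_j$ by hypothesis. The map $x_i\mapsto i$, $p_{ij}\mapsto\{i,j\}$ is the desired isomorphism onto $\mathtt{Inc}(K_\ell)$. Distinctness of the listed vertices is automatic: the $x_i$ are distinct elements of $X^*$, the $p_{ij}$ are distinct by hypothesis, and the two families lie in disjoint parts of the bipartition of $B$. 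Condition (ii) gives the desired subgraph by the same construction with the roles of primes and integers swapped. The whole proof is essentially bookkeeping; the only conceptual point is the initial observation that connectedness of $\mathtt{Inc}(K_\ell)$ forces the embedding to respect bipartitions, which is why exactly the two cases (i) and (ii) arise.
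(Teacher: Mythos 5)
Your proposal is correct and takes essentially the same route as the paper: the converse is the same explicit construction of a subgraph on $\{x_1,\dots,x_\ell\}\cup\{p_{ij}\}$ with the map $x_i\mapsto i$, $p_{ij}\mapsto\{i,j\}$, and the forward direction is the same appeal to connectedness and bipartiteness of $\mathtt{Inc}(K_\ell)$ forcing one bipartite half into $X^*$ or $\rho(X)$ and hence a $K_\ell$ in $\Ga$ or $\De$. The only (harmless) difference is that you make explicit the uniqueness of the bipartition of $\mathtt{Inc}(K_\ell)$, which the paper leaves implicit.
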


\begin{proof}
Suppose that condition (i) holds and let $\mathcal{G}$ be the subgraph of $B$
with $V(\mathcal{G})=\{x_1, x_2, \dots, x_{\ell}\}\dot{\cup}\{p_{ij}, 1\leq i<j\leq \ell \}$ and edges $\{p_{ij}, x_i\}$ and $\{p_{ij}, x_j\}$ for each $i, j$. (Note that $\mathcal{G}$ may not be an induced subgraph.) Let $K_{\ell}$ be the complete graph on $V=\{1, 2, \dots, \ell\}$ with edge set $E=\{e_{ij}=\{i, j\}| 1\leq i<j\leq \ell \}$. Define $\Psi:\mathtt{Inc}(K_{\ell})\longrightarrow \mathcal{G}$ by $\Psi:i\longrightarrow x_i$, $e_{ij}\longrightarrow p_{ij}$. It is straightforward to check that $\Psi$ is a graph isomorphism. A similar isomorphism can be constructed if condition (ii) holds.

Conversely suppose that $B$ contains a subgraph $\mathcal{M}$ isomorphic to
$\mathtt{Inc}(K_{\ell})$. Then $\mathcal{M}$ is connected and bipartite, and
hence one of its bipartite halves, say $V$, has size $\ell$, and each pair of
its elements are at distance two in $B$. Moreover $V$ must be contained in
$X^*$ or $\rho(X)$,  and hence induce a complete subgraph $K_{\ell}$ of
$\Ga$ or $\De$, respectively. It is straightforward to check the
remaining assertions of the condition (i) or (ii) respectively. \qed
\end{proof}

\subsection{Complete subgraphs $K_4$ of $\De$ and $\Ga$}\label{k4}

In this final subsection we show in Theorem~\ref{thm:2.14} that
existence of a complete subgraph $K_4$
of $\De$ or $\Ga$ is equivalent to existence of at least one of a small
number of possible subgraphs of $B$.

To demonstrate that each of the cases
of Theorem ~\ref{thm:2.14} does indeed occur,  we give in
Tables ~\ref{tab:2.2} and ~\ref{tab:2.3}
examples of small subsets $X$ for the various cases.
In Table ~\ref{tab:2.2}, $p, p_1, \dots, p_6$ denote primes such that
$p_i\neq p_j$ for $i\neq j$. Also $L(K_4)$ denotes the \textit{line graph}
of $K_4$, with vertex set $E(K_4)$ and two vertices adjacent if and only if
the corresponding edges of $K_4$ have a vertex in common. We denote by $K_{a, b}^\rightarrow$ a  complete
bipartite subgraph of $B(X)$ with $a$ vertices in $\rho(X)$ and $b$ vertices
in $X^*$. Finally, recall the definition of $\mathtt{Inc}(K_{\ell})$ from
Definition~\ref{def:2.12}, and  let $\mathcal{K}$, $\mathcal{G}$ denote the
graphs in Figure ~\ref{fig:figure2}.

\begin{table}
\caption{Small examples for each case of Theorem~\ref{thm:2.14} with
$\Ga(X)=K_4$.}
\label{tab:2.2}
\begin{center}
\begin{tabular}{|l||c||c||c|}
\multicolumn{4}{c}{} \\ \hline \hspace{25mm}$X$&$B$&$\Ga$&$\De$\\
\hline $\{p, p^2, p^3, p^4\}$& $K_{1,4}^\rightarrow$&$K_4$& $K_1$ \\
\hline $\{p_1 p_2, p_1^2p_2, p_1p_3, p_2p_3\}$&$\mathcal{K}$&
$K_4$&$K_3$ \\ \hline $\{p_1p_2, p_1p_3, p_1p_4,
p_2 p_3 p_4\}$&$\mathcal{G}$&$K_4$&$K_4$ \\ \hline $\{p_1 p_2 p_3,
p_1 p_4 p_5, p_2 p_4 p_6, p_3 p_5 p_6\}$&$\mathtt{Inc}({K_4})$&$K_4$
&$L(K_4)$ \\ \hline
\end{tabular}
\end{center}
\end{table}

\begin{table}
\caption{Small examples for each case of Theorem \ref{thm:2.14} with
$\De(X)=K_4$.}
\begin{center}
\begin{tabular}{|l||c||c||c|}
\multicolumn{4}{c}{} \\ \hline
\hspace{25mm}$X$&$B(X)$&$\Ga(X)$&$\De(X)$\\ \hline

$\{p_1 p_2 p_3 p_4\}$& $K_{4,1}^\rightarrow$&$K_1$&$K_4$  \\ \hline

$\{p_1p_2p_3, p_1p_4, p_2p_3p_4\}$&$\mathcal{K}$& $K_3$&$K_4$ \\ \hline

$\{p_1p_2p_3, p_2p_4, p_3p_4, p_1p_4\}$&$\mathcal{G}$&$K_4$&$K_4$ \\ \hline

$\{p_1p_2, p_1p_3, p_1p_4, p_2p_3,p_2p_4, p_3p_4\}$&$\mathtt{Inc}(K_4)$&$L(K_4)$ &$K_4$\\ \hline

\end{tabular}
\end{center}
\label{tab:2.3}
\end{table}

\begin{figure}
\begin{center}
\includegraphics[height=3cm]{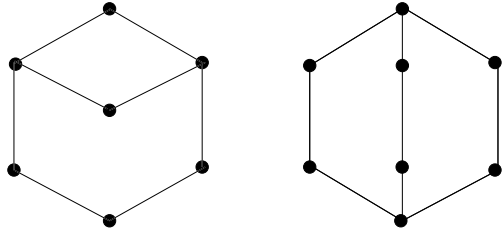}
\caption{The graph $\mathcal{K}$ (to the left) and
$\mathcal{G}$ (to the right) of Theorem ~\ref{thm:2.14}}
\label{fig:figure2}
\end{center}
\end{figure}

\begin{theorem}\label{thm:2.14}
{\em (i)} If $\De$ has a subgraph $K_4$, then $B$ contains a subgraph
isomorphic to one of $K_{4,1}^\rightarrow$, $\mathtt{Inc}(K_4)$, $\mathcal{K}$
or $\mathcal{G}$.

{\em (ii)} If $\Ga$ has a subgraph $K_4$, then $B$ contains a subgraph
isomorphic to one of $K_{1,4}^\rightarrow$, $\mathtt{Inc}(K_4)$, $\mathcal{K}$
or $\mathcal{G}$.

{\em (iii)} If $B$ contains a subgraph isomorphic to one of
$K_{1,4}^\rightarrow$, $K_{4,1}^\rightarrow$, $\mathtt{Inc}(K_4)$,
$\mathcal{K}$ or $\mathcal{G}$, then at least one of $\De$ or $\Ga$
has a subgraph $K_4$.
\end{theorem}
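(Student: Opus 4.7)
The plan is to prove the three parts separately. Part~(i) is the substantive direction; part~(ii) reduces to it via Corollary~\ref{cor1}; and part~(iii) is a direct verification for each of the five listed subgraphs.

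For part~(i), suppose $\De$ contains $K_4$ with vertex set $V = \{p_1, p_2, p_3, p_4\}$. For each $x \in X^*$ write $S_x = \{i : p_i \mid x\}$ and set $\mathcal{A} = \{S_x \cap V : x \in X^*,\ |S_x \cap V| \geq 2\}$. Because every pair $\{p_i, p_j\}$ is an edge of $\De$, every 2-subset of $V$ lies in some member of $\mathcal{A}$. The proof splits into four cases according to the sizes of sets in $\mathcal{A}$: (1) some $A=V$; (2) no $A=V$ but $\mathcal{A}$ contains at least two distinct 3-subsets of $V$; (3) exactly one 3-subset; (4) no 3-subset. Case~(1) yields $K_{4,1}^\rightarrow$ at once, and in case~(4) all six 2-subsets of $V$ must lie in $\mathcal{A}$ with pairwise distinct witnesses in $X^*$, producing $\mathtt{Inc}(K_4)$. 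In case~(3), writing the unique triangle as $T = V \setminus \{v\}$, each of the three edges at $v$ is forced to be covered by the matching 2-subset $\{v, w_i\} \in \mathcal{A}$; the three distinct witnesses of these 2-subsets, together with the witness of $T$, yield $\mathcal{G}$.

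The main obstacle is case~(2). Two distinct 3-subsets $T, T'$ necessarily meet in a 2-set and leave exactly one uncovered edge $e = \{v_1, v_2\}$, where $v_i$ is the vertex in $V$ missing from $T$ and from $T'$ respectively. Choose witnesses $x, y, z$ for $T, T', e$. Then $x, y, z$ are pairwise distinct, because $v_1 \in S_z \setminus S_x$ and $v_2 \in S_z \setminus S_y$. Restricting to the edges from $x$ to the primes of $T$, from $y$ to those of $T'$, and from $z$ to $\{p_{v_1}, p_{v_2}\}$ only (discarding any further edges $z$ may have into $V$ in $B$) gives a subgraph of $B$ that I verify to be isomorphic to $\mathcal{K}$ by a degree check: $x, y$ play the two degree-3 vertices on the 3-side of $\mathcal{K}$, $z$ plays the degree-2 vertex, and the shared two-element neighbourhood of $x, y$ matches the shared neighbourhood of the two degree-3 vertices of $\mathcal{K}$. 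The trickiest aspect here is ensuring that the chosen restriction really yields $\mathcal{K}$ rather than a strictly larger graph, which forces the deliberate removal of the extra edges from $z$.

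For part~(ii), apply Corollary~\ref{cor1} to obtain $Y$ and an isomorphism $\phi: B(X) \to B(Y)$ that swaps the two sides of the bipartition and induces $\Ga(X) \cong \De(Y)$. A $K_4$ in $\Ga(X)$ corresponds to a $K_4$ in $\De(Y)$, so by part~(i) $B(Y)$ contains one of $K_{4,1}^\rightarrow, \mathtt{Inc}(K_4), \mathcal{K}, \mathcal{G}$. Pulling back by $\phi^{-1}$ yields the same abstract graph inside $B(X)$ with bipartite sides exchanged: the exchange converts $K_{4,1}^\rightarrow$ into $K_{1,4}^\rightarrow$ while leaving $\mathtt{Inc}(K_4), \mathcal{K}, \mathcal{G}$ available as abstract subgraphs of $B(X)$.

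For part~(iii) each of the five subgraphs is checked directly. $K_{4,1}^\rightarrow$ forces $K_4 \subseteq \De$ and $K_{1,4}^\rightarrow$ forces $K_4 \subseteq \Ga$ by inspecting divisibility. For $\mathtt{Inc}(K_4)$, Lemma~\ref{lem:2.13} applied with $\ell=4$ yields $K_4$ in $\De$ or $\Ga$ depending on which side of the bipartition lies in $\rho(X)$. For $\mathcal{K}$, the four degree-2 vertices on its 4-side pairwise share a common neighbour on its 3-side, and so form $K_4$ in $\De$ if that side is in $\rho(X)$, in $\Ga$ otherwise. The same common-neighbour computation handles $\mathcal{G}$ on either of its two 4-vertex sides, since every pair of vertices on one side of $\mathcal{G}$ has a common neighbour on the other.
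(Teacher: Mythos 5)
Your proposal is correct and follows essentially the same route as the paper: part (i) by a case analysis on how many of the four primes can divide a single element of $X^*$, part (ii) by reduction to part (i) via Corollary~\ref{cor1}, and part (iii) by a direct common-neighbour (distance~2) check on one bipartite half of each listed subgraph. The only cosmetic difference is that you organise the cases of part (i) by the number of realized $3$-subsets rather than by whether the witnesses $x_{14},x_{24},x_{34}$ coincide, which occasionally routes an instance to $\mathcal{K}$ where the paper would produce $\mathcal{G}$ --- both being admissible conclusions.
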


\begin{proof}
(i) Suppose that $\pi=\{p_1, p_2, p_3, p_4\}\subseteq \rho(X)$ induces
a subgraph $K_4$ of $\De$. If there exists $x\in X$ divisible by
$\prod_{i=1}^4p_i$, then the subgraph of $B$ induced on $\pi\cup\{x\}$ is
$K_{4,1}^\rightarrow$. Thus we may assume that no such $x$ exists.
By the definition of $\De$, for each $i,j$ satisfying $1\leq i < j\leq 4$,
there exists $x_{ij}\in X$ such that $p_ip_j$ divides $x_{ij}$.

Suppose next that some element $x\in X$ is divisible by three of the $p_i$,
without loss of generality, that $x$ is divisible by $p_1p_2p_3$. If $x_{14},
x_{24}, x_{34}$ are all distinct, then the subgraph of $B$ induced on
$\pi\cup\{x, x_{14}, x_{24}, x_{34}\}$ contains the graph $\mathcal{G}$
of Figure~\ref{fig:figure2}. If this is
not the case then, without loss of generality, $x_{14}=x_{24}$, and this
number is therefore divisible by $p_1p_2p_4$. By our assumption, $p_3$
does not divide $x_{14}$, and hence $x_{34}\ne x_{14}$, and the
subgraph of $B$ induced on $\pi\cup\{x, x_{14}, x_{34}\}$ contains
the graph $\mathcal{K}$ of Figure~\ref{fig:figure2}.

Thus we may assume that no element of $X$ is divisible by more than two
primes in $\pi$, and hence that
the $x_{ij}$ are pairwise distinct. It now follows
from Lemma~\ref{lem:2.13} that $B$ contains a subgraph isomorphic to
$\mathtt{Inc}(K_4)$.

(ii) Suppose that $X_0\{x_1, x_2, x_3, x_4\}\subseteq X^*$ induces
a subgraph $K_4$ of $\Ga$. By Corollary~\ref{cor1}, there is a set $Y$
and a graph isomorphism $\phi:B(X)\rightarrow B(Y)$ that
induces an isomorphism $\Ga(X)\cong \De(Y)$.
Thus $\De(Y)$ has an induced subgraph $K_4$, and hence part (ii)
follows from part (i).

(iii) Finally suppose that $B$ contains a subgraph $\mathcal{H}$
isomorphic to one of $K_{1,4}^\rightarrow$, $K_{4,1}^\rightarrow$,
$\mathtt{Inc}(K_4)$, $\mathcal{K}$ or $\mathcal{G}$. Then $\mathcal{H}$ is
connected and bipartite, and the distance 2 graph induced on one of its
bipartite halves is isomorphic to $K_4$. Thus $\De$ or $\Ga$ has a subgraph
isomorphic to $K_4$. \qed
\end{proof}


\end{document}